\renewenvironment{proof}[1][\proofname]
{\par\pushQED{\qed}
	\normalfont\topsep6\p@\@plus6\p@\relax\trivlist
	\item[\hskip\labelsep\bfseries#1\@addpunct{.}]
	\ignorespaces}
{\popQED\endtrivlist\@endpefalse}
\newtheorem{theo}{Theorem}
\newtheorem{observation}{Observation}
\newtheorem{prop}{Theorem}[section]
\newtheorem{lemma}[prop]{Lemma}
\newtheorem{claim}[prop]{Claim}
\newtheorem*{remark*}{Remark}
\newtheorem{fact}[prop]{Fact}
\newtheorem*{notation}{Notation}
\newtheorem{definition}{Definition}
\newcommand{\NN}{\mathbb{N}}
\newcommand{\RR}{{\mathbb R}}
\renewcommand{\a}{\alpha}
\renewcommand{\d}{\delta}
\newcommand{\e}{\epsilon}
\newcommand{\D}{\Delta}
\newcommand{\sub}{\subseteq}
\DeclareMathOperator{\poly}{poly}
\DeclareMathOperator{\twr}{twr}
\newcommand{\R}{{\mathcal R}}
\newcommand{\Q}{{\mathcal Q}}
\renewcommand{\P}{{\mathcal P}}
\newcommand{\V}{{\mathcal V}}
\DeclareMathOperator{\sbu}{\odot}
\newcommand{\coeff}[2]{c_{{#1},{#2}}}
\date{}
\title{On Generalized Regularity}
\author{
	Noga Alon\thanks{Department of Mathematics, 
	Princeton University, Princeton,
	NJ 08544, USA and Schools of Mathematics and Computer Science,
	Tel Aviv University, Tel Aviv, Israel. 
	Email: \texttt{nalon@math.princeton.edu}.
	Research supported in part by
	NSF grant DMS-1855464, 
	ISF grant 281/17,
	BSF grant 2018267
	and the Simons Foundation.}
	\and
	Guy Moshkovitz\thanks{Institute for Advanced Study, Princeton, NJ 08540, USA and
	Rutgers University, New Brunswick, NJ 08854, USA.
	Email: \texttt{guymoshkov@gmail.com}.
	This work was conducted at DIMACS and partially enabled through support from the National Science Foundation under grant number CCF-1445755.}
}
\begin{document}
\maketitle

\begin{abstract}
	Szemer{\'e}di's regularity lemma is one instance in a family of 
regularity lemmas, 
	replacing the definition of density of a graph by a 
more general coefficient.
	Recently, Fan Chung proved another instance, a regularity lemma for 
	clustering graphs, 
	and asked whether good upper bounds could be derived for the 
quantitative estimates it supplies. 
	We answer this question in the negative, for every generalized regularity lemma.
\end{abstract}

\section{Introduction}

Szemer{\'e}di's regularity lemma~\cite{Szemeredi78} is a cornerstone of extremal combinatorics, with applications in graph theory, number theory, computer science and more.
Underlying the lemma is the notion of \emph{density} of a graph, which is the number of edges divided by the total number of vertex pairs.
The lemma says, roughly, that any graph of density bounded away from zero 
can be approximated by a union of a constant number of bipartite graphs $G_i$ that are regular, meaning that all induced subgraphs of a $G_i$ with sufficiently many vertices have approximately the same density.

Recently, Chung~\cite{Chung19} proved a variant of Szemer{\'e}di's 
regularity lemma that is tailor made for \emph{clustering graphs}, 
which are graph with a \emph{clustering coefficient} bounded away from zero. 
The clustering coefficient of a graph $G$ is the number of 
triangles in $G$ divided by the number of two-edge paths in $G$.
It plays a key role in the definition of the ``small world phenomenon'',  
as clustering graphs come up often in real-world settings, such as social and neural networks~(\cite{LucePe49,WattsSt98}).
Chung's regularity lemma says, roughly, that every graph $G$ with a 
clustering coefficient bounded away from zero 
can be approximated by a union of a constant number of induced tripartite subgraphs $G_i$, such that all induced subgraphs of a $G_i$ with sufficiently many two-edge paths have approximately the same clustering coefficient.
Importantly, this regularity lemma is meaningful even if the graph $G$
does not have constant density, unlike Szemer{\'e}di's regularity lemma, as long as the clustering coefficient of $G$ is constant.

More generally, for any fixed graph $H$ and subgraph $F$ of $H$ on the same vertex set, one may consider the analogous \emph{$(H,F)$-coefficient}, namely, the number of copies of $H$ divided by the number of copies of $F$. 
It turns out that an ``$(H,F)$-regularity lemma'' holds in general (see~\cite{Chung19}).

Since clustering graphs often appear ``in nature'', 
it is perhaps reasonable to suspect that they have an efficient 
regularity lemma, meaning one where the number of parts in the partition 
is a slow-growing function of the regularity parameter $1/\e$. 
Indeed, the existence of an efficient regularity lemma was the first open question raised in Chung's paper~\cite{Chung19}, which adds that ``It is of both theoretical
and practical interest to see if the clustering property could be helpful for
reducing the size of the partition''.
More generally, one might ask whether there is an efficient $(H,F)$-regularity lemma for \emph{some} pairs $(H,F)$.
Here we prove that the answer to all these questions is negative.

\subsection{Generalized regularity}

Denote by $n_H(G)$, for a $k$-vertex graph $H$ and a $k$-partite graph $G$, 
the number of 
unlabeled 
copies\footnote{That is, subgraphs of $G$ that are isomorphic to $H$. Alternatively, we may consider labeled copies (injective mappings $V(H) \to V(G)$ that map edges of $H$ to edges $G$) with no real changes in our results.}
of $H$ in $G$ 
where distinct vertices are mapped into distinct vertex classes.
%
For a subgraph $F \sub H$, also on $k$ vertices, the $(H,F)$-coefficient of a $k$-partite graph $G$ is 
$$\coeff{H}{F}(G) = \frac{n_H(G)}{n_F(G)} .$$
Note that the density $d(G)$ of a bipartite graph $G$ is a special case of a $\coeff{H}{F}(G)$; specifically,
if the vertex classes of $G$ are $(V_1,V_2)$ then $\coeff{K_2}{\overline{K_2}}(G) = |G|/|V_1||V_2| = d(G)$.
More generally, $\coeff{H}{\overline{K_k}}(G)$ is the ``$H$-density'' (that is, normalized $H$ count) of $G$.
Moreover, $\coeff{K_3}{P_2}(G)$ is the clustering coefficient of $G$, where $P_2$ denotes the $2$-edge path graph.\footnote{We note that~\cite{Chung19} has, for technical reasons, two definitions of the clustering coefficient, depending on whether the graph is guaranteed to be tripartite or not, and in the former case the denominator is the number of two-edge paths with some fixed orientation. 
	We opted to use a single definition.}

Let us formally define $(H,F)$-regularity. 
We use the notation $x \pm \e$ for a number lying in the interval $[x-\e,\,x+\e]$.

\begin{definition}[$(H,F)$-regular graph]
	Let $F \sub H$ be graphs on $k$ vertices.
	A $k$-partite graph $G$ is \emph{$\e$-$(H,F)$-regular} if for every induced $k$-partite subgraph $G'$ with $n_F(G') \ge \e n_F(G)$, we have $\coeff{H}{F}(G') = \coeff{H}{F}(G) \pm \e$.
\end{definition}

\begin{definition}[$(H,F)$-regular partition]
	Let $F \sub H$ be graphs on $k$ vertices.
	A vertex partition $\P$ of a graph $G$ is \emph{$\e$-$(H,F)$-regular} if all but at most $\e n_H(G)$ copies of $H$ in $G$ lie in induced $k$-partite subgraphs $G[X_1,\ldots,X_k]$, with $X_1,\ldots,X_k \in \P$, that are $\e$-$(H,F)$-regular.
\end{definition}

Our result shows that in any proof of an $(H,F)$-regularity lemma, the order of the partition (that is, its number of parts) can in general be as large as a tower function, 
similarly to the case for Szemer{\'e}di's regularity lemma---as originally shown by Gowers~\cite{Gowers97}.
In fact, the graphs we construct use Gowers' construction (or any other construction witnessing tower-type bounds) as a black box.
Our proof proceeds by showing that for certain special graphs, as defined in Section~\ref{sec:proof}, having an $(H,F)$-regular partition can be used to derive a similarly regular partition.

We are now ready to state our result.
Formally, the tower function is defined recursively by $\twr(n)=2^{\twr(n-1)}$ for $n \ge 1$ and $\twr(0)=1$.
To simplify the presentation, we omit all floor and ceiling signs whenever these are not crucial. 

\begin{theo}\label{theo:main}
	Let $F \subsetneq H$ be graphs on $k$ vertices.
	There is a graph whose every $\e$-$(H,F)$-regular partition is of order at least $\twr(1/\poly(\e k^{k}))$.
\end{theo}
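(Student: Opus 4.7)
The plan is to reduce the general $(H,F)$-regularity problem to the bipartite Szemer\'edi case $(K_2,\overline{K_2})$ and invoke the tower-type lower bound of Gowers as a black box. Since $F\subsetneq H$, fix an edge $\{a,b\}\in E(H)\setminus E(F)$ and, after relabeling vertices, assume $a=1$ and $b=2$. Let $B$ be a bipartite graph on $(U_1,U_2)$ with $|U_1|=|U_2|=n$ produced by a Gowers-type construction, so that any partition of $B$ with even a weak form of Szemer\'edi regularity has order at least $\twr(1/\poly(\e))$. Build the target graph $G$ on vertex classes $V_1=U_1,\,V_2=U_2,\,V_3,\ldots,V_k$ (each of size $n$) by placing $B$ between $V_1$ and $V_2$ and the complete bipartite graph between every other pair of classes.

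A direct computation gives, for any $X_i \sub V_i$,
\begin{equation*}
	n_F(G[X_1,\ldots,X_k]) = \prod_{i} |X_i|, \qquad n_H(G[X_1,\ldots,X_k]) = e(B[X_1,X_2]) \prod_{i \ne 1,2} |X_i|,
\end{equation*}
since $\{1,2\}\in E(H)\setminus E(F)$ and every other required pair of classes is complete. Hence $\coeff{H}{F}(G[X_1,\ldots,X_k]) = d(B[X_1,X_2])$, and substituting into the definition shows that $G[X_1,\ldots,X_k]$ is $\e$-$(H,F)$-regular if and only if $B[X_1,X_2]$ is ``product-$\e$-regular'', meaning $d(B[Z_1,Z_2]) = d(B[X_1,X_2]) \pm \e$ for every $Z_j \sub X_j$ with $|Z_1||Z_2| \ge \e|X_1||X_2|$. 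Setting $|Z_j|\ge\sqrt{\e}|X_j|$, this in turn implies Szemer\'edi $\sqrt{\e}$-regularity of $B[X_1,X_2]$.

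Given an $\e$-$(H,F)$-regular partition $\P$ of $G$, refine it so that each part lies in a single $V_i$ (losing a factor at most $k$ in $|\P|$) and consider the restriction to $V(B)=V_1\cup V_2$. Counting copies of $H$ inside each $k$-tuple of parts and summing over the $k-2$ ``free'' classes (which contribute a factor of $n^{k-2}$ that cancels with the same factor in $n_H(G)$), the definition of $\e$-$(H,F)$-regular partition yields the edge-weighted bound
\begin{equation*}
	\sum_{(X_1,X_2)\text{ irregular}} e(B[X_1,X_2]) \;\le\; \e\,e(B),
\end{equation*}
where ``irregular'' means $B[X_1,X_2]$ fails to be Szemer\'edi $\sqrt{\e}$-regular. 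Feeding this into the black-box lower bound for this edge-weighted form of bipartite regularity yields $|\P_1|+|\P_2|\ge\twr(1/\poly(\e))$ which, after accounting for the refinement step and the polynomial losses in the various conversions, gives the claimed $|\P|\ge\twr(1/\poly(\e k^k))$.

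The main obstacle is that $(H,F)$-regularity of $\P$ only bounds the number of $H$-copies in bad tuples, which translates to an \emph{edge-weighted} bound on bad pairs of $B$ rather than the stronger pair-count form typically used in Gowers' classical lower bound. The black box must therefore be a construction robust enough to force tower-type order under this weaker hypothesis. Fortunately, Gowers' graph has the property that all sufficiently large subsets have density bounded away from both $0$ and $1$, so edge-weighted and size-weighted bounds on bad pairs differ only by a multiplicative constant; alternatively, one may appeal to variants of the construction tailored to weaker notions of regularity.
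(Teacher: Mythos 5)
Your high-level plan---place a Gowers graph $B$ between $V_1$ and $V_2$, then translate $(H,F)$-regularity into ordinary bipartite regularity---is the same as the paper's. However, the concrete construction and the counting formulas that drive it are wrong in general, and this is where the real work of the paper lies.

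The issue is your choice to put \emph{complete} bipartite graphs between every pair of classes other than $(V_1,V_2)$. Then a $k$-tuple $(v_1,\ldots,v_k)$ with one vertex per class spans either $K_k$ or $K_k$ minus an edge, and your formulas
\[
n_F(G[X_1,\ldots,X_k])=\prod_i|X_i|, \qquad n_H(G[X_1,\ldots,X_k])=e(B[X_1,X_2])\prod_{i\ne 1,2}|X_i|
\]
hold only if every such tuple contributes exactly one copy of $F$ and exactly $\mathbf{1}[\{v_1,v_2\}\in B]$ copies of $H$. The first requires $n_F(K_k)=n_F(K_k\setminus e)=1$, which forces $F=\overline{K_k}$; the second requires $n_H(K_k)=1$ and $n_H(K_k\setminus e)=0$, which forces $H=K_k$. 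For every other pair $(H,F)$ both counts are wrong. A concrete example: $k=3$, $H=K_3$, $F=K_2\cup\overline{K_1}$; here $n_F(K_3)=3$ and $n_F(K_3\setminus e)=2$, so $n_F(G[X_1,X_2,X_3])=(2+d(B[X_1,X_2]))|X_1||X_2||X_3|$, not $\prod|X_i|$. Even the motivating clustering case $H=K_3$, $F=P_2$ is not covered by your formulas. The consequence is that $\coeff{H}{F}(G[X_1,\ldots,X_k])$ is \emph{not} equal to $d(B[X_1,X_2])$, and the entire translation of $\e$-$(H,F)$-regularity to $\e$-regularity of $B$ collapses.

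The fix is to build $G$ as a semi-blowup of $H$ itself, not of $K_k$: between classes $V_i,V_j$ with $\{i,j\}\ne\{1,2\}$, put a complete bipartite graph if $\{i,j\}\in E(H)$ and an empty one if $\{i,j\}\notin E(H)$. Then each $k$-tuple spans exactly $H$ or $H\setminus e$, which gives a clean $H$-count ($1$ or $0$) and an $F$-count of the form $a+b\cdot\mathbf{1}[\{v_1,v_2\}\in B]$ with $1\le a$ and $a+b\le k!$. This makes $\coeff{H}{F}$ the M\"obius transform $d\mapsto d/(a+bd)$ of the edge density rather than the density itself, so the translation to Szemer\'edi regularity requires inverting this function and applying the mean value theorem to control the distortion (incurring the $k^{O(k)}$ loss in the final bound). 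That is the content of the paper's Lemma 2.3 and Claim 2.4 and it is exactly what your proposal skips.

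On your ``main obstacle'' (edge-weighted versus pair-weighted bound on irregular pairs): the obstacle is real, but the fix you suggest---relying on the Gowers graph having density bounded away from $0$ and $1$ on large sets---is unnecessarily specific to the construction. The clean, black-box fix used in the paper is the observation that a bipartite graph of density at most $\e^3$ is automatically $\e$-regular, so on every \emph{irregular} pair the density exceeds ${\e'}^3$ and the edge count dominates ${\e'}^3$ times the pair count; this converts the edge-weighted bound into the pair-weighted bound for any lower-bound graph. Finally, your step ``refine $\P$ so that each part lies in a single $V_i$, losing a factor $k$'' glosses over a genuine issue: parts of $\P$ whose intersection with some $V_i$ is a tiny fragment break the slicing argument, and one must first discard these via an approximate-restriction step (the paper's Claim 2.5) before slicing; this is also why the construction must be a \emph{balanced} semi-blowup.
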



\section{Lower Bound Proof}\label{sec:proof}

\subsection{Semi-blowups}

The central definition in our proof is that of a \emph{semi-blowup}.
A \emph{blowup} of a graph $H$ is any graph obtained by replacing every vertex $i$ of $H$ by a (non-empty) independent set $V_i$ of new vertices, each edge $\{i,j\}$ of $H$ by a complete bipartite graph between $V_i$ and $V_j$, and each non-edge $\{i,j\}$ of $H$ by an empty bipartite graph between $V_i$ and $V_j$.
A \emph{semi-blowup of $H$} is any graph obtained from a blowup of $H$ by replacing the bipartite graph between $V_i$ and $V_j$---for only one choice of $\{i,j\}$---by any bipartite graph.
We write $H \sbu_e G_0$ for a semi-blowup of $H$ in which 
$G_0$ replaces the bipartite graph corresponding to $e=\{i,j\}$. 

We write $\NN=\{0,1,2,\ldots\}$ for the set of nonnegative integers, and $[k]=\{1,\ldots,k\}$.

\begin{notation}
	In everything that follows, we fix:
	\begin{center}
	an integer $k \ge 2$, graphs $F \subsetneq H$ on the vertex set $[k]$,\\
	and an edge $e=\{1,2\} \in E(H) \setminus E(F)$.
	\end{center}
\end{notation}
Henceforth, in the semi-blowup $H \sbu_e G_0$, the first vertex class of $G_0$ is embedded as $V_1$ and the second as $V_2$.
We will repeatedly use the following easy properties of semi-blowups.
The first property shows that an induced subgraph of a semi-blowup of $H$ is again a semi-blowup of $H$.
\begin{observation}\label{obs:sbu-induced}
	For all $U_1 \sub V_1,\ldots,U_k \sub V_k$, 
	the induced subgraph $(H \sbu_e G_0)[U_1,\ldots,U_k]$ is of the form $H \sbu_e G_0[U_1,U_2]$.
\end{observation}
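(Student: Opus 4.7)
The plan is to verify Observation~\ref{obs:sbu-induced} by a direct unpacking of the definition of semi-blowup, viewing a semi-blowup as a gluing of bipartite graphs between pairs of vertex classes and observing that taking an induced subgraph commutes with these individual bipartite pieces.

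First, I would recall from the definition that the graph $H \sbu_e G_0$ is entirely determined, once the vertex classes $V_1,\ldots,V_k$ are fixed, by a list of bipartite graphs between each pair $(V_i, V_j)$: specifically, the complete bipartite graph on $(V_i, V_j)$ for $\{i,j\} \in E(H) \setminus \{e\}$, the empty bipartite graph for $\{i,j\} \notin E(H)$, and the bipartite graph $G_0$ for $\{i,j\} = e = \{1,2\}$ (with the first vertex class of $G_0$ embedded as $V_1$ and the second as $V_2$). Each $V_i$ is internally an independent set.

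Next, I would observe that passing to the induced subgraph on $U_1 \cup \cdots \cup U_k$ simply restricts each of these bipartite pieces: the complete bipartite graph on $(V_i,V_j)$ restricts to the complete bipartite graph on $(U_i,U_j)$, the empty one restricts to the empty one on $(U_i,U_j)$, and $G_0$ on $(V_1, V_2)$ restricts to $G_0[U_1,U_2]$ on $(U_1,U_2)$. Each $U_i$ remains independent. Comparing the resulting list of bipartite pieces to the definition, this is exactly the semi-blowup $H \sbu_e G_0[U_1, U_2]$ with vertex classes $U_1,\ldots,U_k$, where the first and second vertex classes of $G_0[U_1,U_2]$ are embedded as $U_1$ and $U_2$ respectively.

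There is essentially no obstacle; the only thing worth being careful about is bookkeeping the ordered roles of $V_1,V_2$ (and correspondingly $U_1,U_2$) so that the restriction of $G_0$ is genuinely $G_0[U_1,U_2]$ with the correct orientation of its two vertex classes, matching the convention fixed just before the observation.
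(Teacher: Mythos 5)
Your proof is correct; the paper states this observation without any proof, treating it as immediate from the definition of a semi-blowup, and your direct unpacking of the bipartite pieces is exactly the argument one would supply. The bookkeeping of which class of $G_0$ sits in $V_1$ versus $V_2$ is the only point of care, and you handle it correctly.
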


The second property shows that the ``$H$-density'' of a semi-blowup of $H$ is exactly the (edge-) density of the replacement bipartite graph.
\begin{observation}\label{obs:sbu-copies}
	$n_H(H \sbu_e G_0)=d(G_0)|V_1|\cdots|V_k|$.
\end{observation}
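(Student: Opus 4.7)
The plan is to enumerate copies of $H$ directly by parameterizing them by their vertex sets. A subgraph counted by $n_H(H \sbu_e G_0)$ must have exactly $k$ vertices, one in each class $V_i$, so it is supported on a unique tuple $(v_1,\ldots,v_k) \in V_1 \times \cdots \times V_k$. The goal is to show that each such tuple with $v_1 v_2 \in E(G_0)$ contributes exactly one (unlabeled) copy of $H$ and every other tuple contributes none; summing then gives $|E(G_0)| \cdot |V_3|\cdots|V_k| = d(G_0)|V_1|\cdots|V_k|$.

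For the tuple analysis, I would unpack the semi-blowup definition pair by pair. For each $\{i,j\} \neq e$, the bipartite graph between $V_i$ and $V_j$ is complete if $\{i,j\} \in E(H)$ and empty if $\{i,j\} \notin E(H)$, so the presence of the edge $v_i v_j$ in $H \sbu_e G_0$ is forced to match exactly the adjacency in $H$. For the single special pair $e = \{1,2\} \in E(H)$, the edge $v_1 v_2$ is present if and only if $v_1 v_2 \in E(G_0)$. Consequently the induced subgraph $(H \sbu_e G_0)[\{v_1,\ldots,v_k\}]$ is isomorphic to $H$ when $v_1 v_2 \in E(G_0)$, and to $H-e$ otherwise.

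To convert this structural information into a copy-count, I would observe that any subgraph on $\{v_1,\ldots,v_k\}$ isomorphic to $H$ must contain $|E(H)|$ edges, all of which must be present in $H \sbu_e G_0$. When the induced subgraph is $H-e$, it has strictly fewer than $|E(H)|$ edges, so no such subgraph exists; when the induced subgraph is $H$, the full edge set is forced, so the induced subgraph is itself the unique subgraph on this vertex set isomorphic to $H$. Aggregating over tuples gives exactly $|E(G_0)| \cdot |V_3|\cdots|V_k|$, which rewrites as $d(G_0)|V_1|\cdots|V_k|$.

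I do not anticipate a substantive obstacle: the statement is essentially a bookkeeping identity and the argument is a direct case analysis enabled by the semi-blowup definition. The only point that requires a moment of care is confirming that ``unlabeled copy'' matches the count of one-per-vertex-set in both directions (no over- or undercounting from automorphisms of $H$), which is handled by the edge-count rigidity used in the last step.
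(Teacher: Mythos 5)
Your proof is correct and follows essentially the same route as the paper: both enumerate by the choice of one vertex per class, note that such a tuple spans a copy of $H$ exactly when $\{v_1,v_2\} \in E(G_0)$, and observe that this copy is unique on its vertex set (your edge-counting justification just spells out the paper's remark that the copy is induced), giving $|G_0|\cdot|V_3|\cdots|V_k| = d(G_0)|V_1|\cdots|V_k|$.
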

\begin{proof}
	Since $e = \{1,2\} \in E(H)$, 
	a choice of a vertex from each vertex class, $v_1\in V_1,\ldots,v_k \in V_k$, spans a copy of $H$ if and only if $\{v_1,v_2\} \in E(G_0)$. Moreover, such a copy is necessarily an induced copy of $H$, and thus the only copy of $H$ on this set of vertices. We conclude $n_H(H \sbu_e G_0)=|G_0|\cdot|V_3|\cdots|V_k|=d(G_0)|V_1|\cdots|V_k|$.
\end{proof}

Our choice of $F$ and $e$ has the following implication, which will be central to our proofs.

\begin{lemma}\label{lemma:sbu}
	There are integers $a,b \in \NN$, with $a \ge 1$ and $a+b \le k!$, such that for every bipartite graph $G_0$ of density $d$, 
	\begin{equation}\label{eq:sbu-F}
	n_F(H \sbu_e G_0) = (a+bd)|V_1|\cdots|V_k| .
	\end{equation}
	This has the following corollaries:
	\begin{equation}\label{eq:sbu-F-lb}
	n_F(H \sbu_e G_0) \ge |V_1|\cdots|V_k| ,
	\end{equation}
	and, by Observation~\ref{obs:sbu-copies},
	\begin{equation}\label{eq:sbu-coeff}
	\coeff{H}{F}(H \sbu_e G_0) = \frac{d}{a+bd} .
	\end{equation}
\end{lemma}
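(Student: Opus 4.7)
I would count copies of $F$ in $H\sbu_e G_0$ by stratifying over how $V(F)=[k]$ is distributed among the vertex classes. Since $n_F$ counts only copies with one vertex per class, the class assignment must be a permutation $\sigma\in S_k$, and specifying a copy amounts to choosing, in addition, a representative $v_i\in V_{\sigma(i)}$ for each $i\in[k]$ such that every edge of $F$ is mapped to an edge of $H\sbu_e G_0$. I will work with the labeled count, i.e.\ allowing $\sigma$ to be an arbitrary element of $S_k$; the unlabeled version follows by dividing $a$ and $b$ by $|\mathrm{Aut}(F)|$, and this is integer-valued because the set of good permutations of each type (defined below) is closed under right multiplication by $\mathrm{Aut}(F)$.

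Next I would classify the permutations. The edges of $H\sbu_e G_0$ between $V_i$ and $V_j$ form a complete bipartite graph when $\{i,j\}\in E(H)\setminus\{e\}$, form $G_0$ when $\{i,j\}=e$, and are absent when $\{i,j\}\notin E(H)$. Hence $\sigma$ contributes a nonzero number of copies only if it is \emph{good}, meaning $\sigma(\{i,j\})\in E(H)$ for every $\{i,j\}\in E(F)$. Because $\sigma$ acts injectively on unordered pairs, at most one edge of $F$ can satisfy $\sigma(\{i,j\})=e$; this is precisely the observation that forces the count to be \emph{affine} in $d=d(G_0)$ rather than a higher-degree polynomial.

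I would then split the good permutations into Type~A (no edge of $F$ maps to $e$) and Type~B (exactly one edge of $F$ maps to $e$). A Type~A $\sigma$ contributes $|V_1|\cdots|V_k|$ copies, since every $F$-edge lands in a complete bipartite graph and the vertex choices are therefore unconstrained. A Type~B $\sigma$ contributes $d\,|V_1|\cdots|V_k|$ copies, since the single $e$-constrained pair ranges over the $d|V_1||V_2|$ edges of $G_0$ while the other $k-2$ vertices are free. Letting $a$ and $b$ be the numbers of Type~A and Type~B good permutations then yields \eqref{eq:sbu-F}, and $a+b$ is at most the total number $k!$ of permutations.

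The only nonroutine step is showing $a\ge 1$. Here I would exhibit the identity: $\sigma=\mathrm{id}$ is good because $E(F)\subseteq E(H)$, and it is of Type~A because $e\in E(H)\setminus E(F)$ means no edge of $F$ equals $e$. The two displayed corollaries are then automatic: \eqref{eq:sbu-F-lb} follows from \eqref{eq:sbu-F} via $a\ge 1$ and $bd\ge 0$, while \eqref{eq:sbu-coeff} is Observation~\ref{obs:sbu-copies} divided by \eqref{eq:sbu-F}.
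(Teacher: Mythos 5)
Your proof is correct and follows essentially the same route as the paper: both stratify the $F$-count by whether the pair in $V_1\times V_2$ is an edge of $G_0$, yielding an affine function of $d$. The paper arrives at the same $a=n_F(H^-)$ and $b=n_F(H)-n_F(H^-)$ (with $H^-:=H-e$) slightly more directly, by observing that each choice $(v_1,\ldots,v_k)$ induces either $H$ or $H^-$ and counting unlabeled $F$-subgraphs of each, which sidesteps the $\mathrm{Aut}(F)$ bookkeeping in your labeled-copies decomposition.
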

\begin{proof}
	Put $H^- = (V(H),\, E(H)\setminus\{e\})$.
	For every choice of vertices $v_1 \in V_1,\ldots,v_k \in V_k$, 
the copy of $H$ they induce in $G$ contains $n_F(H)$ copies of $F$ if $\{v_1,v_2\} \in G_0$, and otherwise only $n_F(H^-)$ copies of $F$.
	It follows that
	$$n_F(G) = n_F(H^-)|V_1|\cdots|V_k| + (n_F(H)-n_F(H^-))|G_0||V_3|\cdots|V_k| .$$
	Put $a=n_F(H^-)$ and $b=n_F(H)-a$, so that $n_F(G) = (a+bd)|V_1|\cdots|V_k|$.
	We clearly have $a+b = n_F(H) \le k!$.
	Moreover, and crucially, we have $a \ge 1$, since $F$ has at least one copy in $H$ that does not contain $e$, namely, $F$---as $e \notin E(F)$.
	This completes the proof.
\end{proof}

\subsection{Regularity of semi-blowups}

We use Lemma~\ref{lemma:sbu} to show that a semi-blowup can be used to reduce $(H,F)$-regularity to Szemer{\'e}di's regularity.
Recall that a bipartite graph $G$ on $(V_1,V_2)$ is said to be \emph{$\e$-regular} if for all induced bipartite subgraphs $G[S_1,S_2]$ with $|S_1| \ge \e|V_1|$ and $|S_2| \ge \e|V_2|$ we have $d(G[S_1,S_2]) = d(G) \pm \e$.

\begin{claim}\label{claim:reg-partite}
	For every bipartite graph $G_0$, 
	if $H \sbu_e G_0$ is $\e$-$(H,F)$-regular then $G_0$ is $\sqrt{\e} \cdot k^{2k}$-regular.
\end{claim}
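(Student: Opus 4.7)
The plan is to reduce $\e$-$(H,F)$-regularity of $H \sbu_e G_0$ directly to $\sqrt{\e} k^{2k}$-regularity of $G_0$ by ``probing'' the former on induced subgraphs of a very restricted form. Given any $S_1 \sub V_1$ and $S_2 \sub V_2$ of relative sizes at least $\sqrt{\e} k^{2k}$, I would consider the induced subgraph $G' = (H \sbu_e G_0)[S_1, S_2, V_3, \ldots, V_k]$, which by Observation~\ref{obs:sbu-induced} is itself a semi-blowup, namely $H \sbu_e G_0[S_1, S_2]$. The strategy is then to feed $G'$ into the $(H,F)$-regularity hypothesis of $H \sbu_e G_0$ and translate the resulting conclusion on $\coeff{H}{F}$ back into a bound on $|d(G_0[S_1,S_2]) - d(G_0)|$.

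The first technical step is to verify the size hypothesis $n_F(G') \ge \e \cdot n_F(H \sbu_e G_0)$ of the regularity definition. Writing $d = d(G_0)$ and $d' = d(G_0[S_1,S_2])$, equation~(\ref{eq:sbu-F}) gives the ratio as $\frac{(a+bd')\,|S_1||S_2|}{(a+bd)\,|V_1||V_2|}$. The key quantitative features from Lemma~\ref{lemma:sbu} are that $a \ge 1$ and $a+b \le k!$; combining these with the size assumption on $S_1,S_2$ and the crude bound $k! \le k^k$, the ratio comes out comfortably above $\e$, so the $(H,F)$-regularity condition is applicable.

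The second step is to convert the resulting bound $|\coeff{H}{F}(G') - \coeff{H}{F}(H \sbu_e G_0)| \le \e$ into the desired bound on $|d' - d|$. By~(\ref{eq:sbu-coeff}) the left-hand side equals $\bigl|\frac{d'}{a+bd'} - \frac{d}{a+bd}\bigr|$, which clears to $\frac{a\,|d' - d|}{(a+bd)(a+bd')}$. Using $a \ge 1$ and $a+bd,\, a+bd' \le a+b \le k!$ once more, isolating $|d' - d|$ gives a bound of order $(k!)^2 \e \le k^{2k}\e$, which is well within the target $\sqrt{\e}\, k^{2k}$ (for $\e \in (0,1]$).

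I don't anticipate any real obstacle in the argument; the work is just bookkeeping of the factor $k^{2k}$, which enters in exactly two places---once from the size threshold needed to apply $(H,F)$-regularity, and once as a Lipschitz-type constant for the map $x \mapsto x/(a+bx)$---both uniformly controlled by the single crude inequality $a+b \le k!$ supplied by Lemma~\ref{lemma:sbu}.
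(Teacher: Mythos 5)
Your proposal is correct and follows the same overall reduction as the paper: probe the $\e$-$(H,F)$-regularity of $H \sbu_e G_0$ on the induced subgraph $G'=(H \sbu_e G_0)[S_1,S_2,V_3,\ldots,V_k]$, identify it as a semi-blowup via Observation~\ref{obs:sbu-induced}, verify the $n_F$ size hypothesis using $a\ge 1$ and $a+b\le k!$, and then invert the map $x\mapsto x/(a+bx)$ on the resulting $\coeff{H}{F}$ estimate. The one place where your argument genuinely diverges from the paper's is the inversion step: the paper introduces the explicit inverse $g(x)=ax/(1-bx)$, bounds $g'$ on the relevant interval (which requires a side case distinction on whether $\e\le k^{-2k}$), and applies the mean value theorem; you instead clear denominators directly to get
\[
\Bigl|\frac{d'}{a+bd'}-\frac{d}{a+bd}\Bigr|=\frac{a\,|d'-d|}{(a+bd)(a+bd')},
\]
from which $|d'-d|\le \frac{(a+bd)(a+bd')}{a}\,\e\le (a+b)^2\e\le k^{2k}\e\le \sqrt{\e}\,k^{2k}$ falls out immediately (for $\e\le 1$). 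This algebraic route is a bit more elementary and sidesteps the derivative computation and the accompanying threshold check on $\e$; the paper's MVT formulation is perhaps more in line with how one would generalize to other coefficient functions $f$ that are not rational of this simple form. Both yield the same $k^{2k}$-type constant. One small thing you leave implicit, as the paper also effectively does, is the degenerate case $\sqrt{\e}\,k^{2k}>1$, in which the conclusion holds vacuously since there are then no subsets $S_i$ of the required relative size.
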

\begin{proof}
	Put $G = H \sbu_e G_0$ and $\e'=\sqrt{\e} \cdot k^{2k}$.
	Let $S_1 \sub V_1$ and $S_2 \sub V_2$ be subsets with $|S_1| \ge \e'|V_1|$ and $|S_2| \ge \e'|V_2|$, and put $d^*=d(G[S_1,S_2])$, $d=d(G[V_1,V_2])$.
	We will prove that  $d^* = d \pm \e'$, which would complete the proof as it would imply that $G_0$ is $\e'$-regular.
	
	By Observation~\ref{obs:sbu-induced}, the induced $k$-partite graph $G[S_1,S_2,V_3\ldots,V_k]$ is a semi-blowup of $H$ of the form $H \sbu_e G[S_1,S_2]$.
	It follows from~(\ref{eq:sbu-coeff}) that there are integers $a,b \in \NN$, 
	with $a \ge 1$ and $a+b \le k!$, 
	such that 
	\begin{equation}\label{eq:coeff-f}
	\coeff{H}{F}(G)=f(d) \quad\text{ and }\quad \coeff{H}{F}(G[S_1,S_2,V_3,\ldots,V_k])=f(d^*)
	\end{equation}
	where
	$$f(x) = \frac{x}{a+bx}\;, \qquad f \colon[0,1] \to \RR .$$
	Note that $f'(x) = a/(a+bx)^2 \ge 0$, so $f$ is monotone increasing and $f(x) \le f(1) = 1/(a+b)$.
	Let 
	$$g(x) = \frac{ax}{1-bx}\;, \qquad g \colon[0,1/b) \to \RR .$$
	Observe that $g$ and $g \circ f$ are well defined and, for every $x \in [0,1]$, we have $g(f(x)) = x$.
	Note that $g'(x) = a/(1-bx)^2$ for every $x \in [0,1/b)$, and so is monotone increasing; this implies the bound
	$$g'\big(f(x)+\e\big) \le  g'\Big(\frac{b+ a/2}{b(a+b)}\Big) = a\Big(\frac{a+b}{a/2}\Big)^2 
	\le k^{2k} ,$$
	where the first inequality assumes $\e \le \frac{1}{k^{2k}}$ (otherwise there is nothing to prove),  
	so $\e \le \frac{a/2}{b(a+b)}$.
	
	Using~(\ref{eq:sbu-F-lb}) we deduce the bound
	$$n_F(G[S_1,S_2,V_3,\ldots,V_k]) \ge |S_1||S_2||V_3|\cdots|V_k| \ge {\e'}^2|V_1|\cdots|V_k| \ge ({\e'}^2/k!) n_F(G) \ge \e \cdot n_F(G) .$$
	Thus, since $G$ is $\e$-$H/F$-regular, $\coeff{H}{F}(G[S_1,S_2,V_3\ldots,V_k]) = \coeff{H}{F}(G) \pm \e$, that is, 
	$f(d^*) = f(d) \pm \e$ by~(\ref{eq:coeff-f}).
	We deduce, using the properties of $f$ and $g$ above, that
	$$d^* = g(f(d^*)) = g(f(d)) \pm \e \cdot \max_{x = f(d) \pm \e} g'(x) = d \pm \e \cdot k^{2k} ,$$
	where the second step applies the mean value theorem on $g$ in the interval between $f(d^*)$ and $f(d)$. 
	Since $\e \cdot k^{2k} \le \e'$, this completes the proof.
\end{proof}


\subsection{Regular partitions---auxiliary claims}


The following claim gives an approximate restriction of a partition onto a subset. 
\begin{claim}[Approximate restriction]\label{lemma:approx-cover}
	Let $\P$ be a partition of $U$ and let $V \sub U$ be a subset. For $\P' = \big\{ X \in \P \,\big\vert\, |X \cap V| \ge \d|X| \big\}$ with $\d=\a|V|/|U|$ we have
	$$\sum_{X \in \P'} |X \cap V| \ge (1-\a)|V|.$$
\end{claim}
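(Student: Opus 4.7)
The plan is to bound the contribution to $V$ from the ``bad'' parts---those not in $\P'$---and obtain the stated bound by complementation. Specifically, I would let $\P'' = \P \setminus \P'$, so by definition $|X \cap V| < \d|X|$ for every $X \in \P''$. Since $\P$ is a partition of $U$, the intersections $\{X \cap V : X \in \P\}$ partition $V$, which yields the identity $\sum_{X \in \P'} |X \cap V| = |V| - \sum_{X \in \P''} |X \cap V|$. So it suffices to prove the upper bound $\sum_{X \in \P''} |X \cap V| \le \a|V|$.

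The upper bound is a one-line averaging (Markov-type) argument:
\[
\sum_{X \in \P''} |X \cap V| \;<\; \d \sum_{X \in \P''} |X| \;\le\; \d \sum_{X \in \P} |X| \;=\; \d|U| \;=\; \a|V|,
\]
where the first inequality uses the defining property of $\P''$, the second uses nonnegativity (and the fact that $\P$ partitions $U$), and the last equality substitutes the choice $\d = \a|V|/|U|$. Subtracting from $|V|$ then gives exactly $\sum_{X \in \P'} |X \cap V| \ge (1-\a)|V|$.

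There is no real obstacle: the only point to be careful about is that $\P$ is a genuine partition of $U$, so $\sum_{X \in \P} |X| = |U|$ and the intersections with $V$ sum to $|V|$; if $\P$ were merely a cover, the double counting would lose a factor. The strict inequality above is harmless, since the conclusion is only a weak inequality.
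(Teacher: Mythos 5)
Your proof is correct and follows the same complementation-plus-Markov argument as the paper: bound $\sum_{X \notin \P'} |X\cap V|$ by $\d\sum_{X\in\P}|X| = \a|V|$, then subtract from $|V|$. The only cosmetic difference is your strict inequality in the first step, which is harmless.
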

\begin{proof}
	We have
	$$\sum_{X \in \P \setminus \P'} |X \cap V| \le \sum_{X \in \P \setminus \P'} \d|X| 
	\le \sum_{X \in \P} \d|X| = \d |U| = \a|V| .$$
	Therefore,
	$$\sum_{X \in \P'} |X \cap V|
	= \sum_{X \in \P} |X \cap V| - \sum_{X \in \P \setminus \P'} |X \cap V|
	\ge (1-\a)|V| .$$
\end{proof}

The \emph{slicing lemma} for regular graphs shows that a sufficiently large induced subgraph of a regular bipartite graph is regular.
We will need the following analogue for $(H,F)$-regularity.

\begin{claim}[Slicing]\label{claim:slicing}
	Let $T$ be a $k$-partite graph on $(X_1,\ldots,X_k)$, 
	and let $Y_1 \sub X_1, \ldots, Y_k \sub X_k$ with $|Y_i| \ge \d_i|X_i|$
	be such that $G:=T[Y_1,\ldots,Y_k]$ is a semi-blowup of $H$.
	If $T$ is $\e$-$(H,F)$-regular then $G$ is $\e'$-$(H,F)$-regular with $\e'=\e \cdot k!/(\d_1\cdots\d_k)$.
\end{claim}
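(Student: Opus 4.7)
The plan is to invoke the $\e$-$(H,F)$-regularity of the ambient graph $T$ twice---once for $G$ and once for an arbitrary induced $k$-partite test subgraph $G' \sub G$---and combine the two estimates via the triangle inequality.

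Concretely, I would fix $G' = G[Z_1,\ldots,Z_k]$ with $n_F(G') \ge \e' n_F(G)$ and aim to show $\coeff{H}{F}(G') = \coeff{H}{F}(G) \pm \e'$. Since $G' = T[Z_1,\ldots,Z_k]$ is simultaneously an induced $k$-partite subgraph of $T$, it suffices to verify that both $G$ and $G'$ qualify as test subgraphs in the definition of $\e$-$(H,F)$-regularity of $T$: namely, $n_F(G), n_F(G') \ge \e \cdot n_F(T)$.

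This is where the factor $k!/(\d_1\cdots\d_k)$ enters, in a way that exactly matches the two sources of loss. Because $G$ is a semi-blowup on $(Y_1,\ldots,Y_k)$, inequality~(\ref{eq:sbu-F-lb}) gives $n_F(G) \ge |Y_1|\cdots|Y_k| \ge (\d_1\cdots\d_k)|X_1|\cdots|X_k|$. On the other hand, the crude upper bound $n_F(T) \le k! \cdot |X_1|\cdots|X_k|$ holds since every $k$-tuple with one vertex per class supports at most $k!$ copies of $F$. Combining with the hypothesis $n_F(G') \ge \e' n_F(G)$ yields $n_F(G') \ge (\e' \d_1\cdots\d_k / k!)\, n_F(T) = \e \cdot n_F(T)$; a simpler calculation (using the harmless normalization $\e' \le 1$) handles $G$ itself.

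Applying $\e$-$(H,F)$-regularity of $T$ to the subgraphs $G$ and $G'$ then gives $\coeff{H}{F}(G), \coeff{H}{F}(G') = \coeff{H}{F}(T) \pm \e$, and the triangle inequality delivers $\coeff{H}{F}(G') = \coeff{H}{F}(G) \pm 2\e \sub \coeff{H}{F}(G) \pm \e'$, as desired. I foresee no genuine obstacle---the argument is pure bookkeeping, and the multiplicative slack $k!/(\d_1\cdots\d_k)$ was engineered precisely to absorb the density loss from restricting to $(Y_1,\ldots,Y_k)$ together with the combinatorial factor of at most $k!$ between copies of $F$ and $k$-tuples.
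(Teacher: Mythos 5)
Your proof is correct and follows essentially the same route as the paper: both use the semi-blowup lower bound~(\ref{eq:sbu-F-lb}) together with $n_F(T) \le k!\,|X_1|\cdots|X_k|$ to show that $G$ and any admissible test subgraph $G'$ qualify as test subgraphs for the $\e$-$(H,F)$-regularity of $T$, then combine the two resulting estimates and absorb the factor $2$ into $\e'$. The paper phrases the application to $G$ as the ``special case $S_i = Y_i$'' rather than a separate verification, but the content (including the implicit reliance on $\e' \le 1$) is identical.
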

\begin{proof}
	Put $\D=\d_1\cdots\d_k$.
	Since, by assumption, $G$ is a semi-blowup of $H$, we have, using~(\ref{eq:sbu-F-lb}),
	$$n_F(G) \ge |Y_1|\cdots|Y_k| \ge \D|X_1|\cdots|X_k| \ge \frac{\D}{k!} n_F(T) = \frac{\e}{\e'} n_F(T).$$
	Let $S_1 \sub Y_1,\ldots,S_k \sub Y_k$ be subsets with $n_F(G[S_1,\ldots,S_k]) \ge \e' \cdot n_F(G)$. 	
	Then, by the above inequality, $n_F(G[S_1,\ldots,S_k]) \ge \e \cdot n_F(T)$.
	Since $T$ is $\e$-$(H,F)$-regular, we thus have
	$$\coeff{H}{F}(G[S_1,\ldots,S_k]) = \coeff{H}{F}(T) \pm \e .$$
	A special case of the above, obtained by taking $S_i=Y_i$, is
	$$\coeff{H}{F}(G) = \coeff{H}{F}(T) \pm \e .$$
	Combining the last two estimates, we get
	$\coeff{H}{F}(G[S_1,\ldots,S_k]) = \coeff{H}{F}(G) \pm 2\e$.
	Since $2\e \le \e'$ we deduce that $G$ is $\e'$-$(H,F)$-regular, as needed.
\end{proof}

We will also need the following standard fact. 
\begin{fact}\label{fact:trivial-reg}
	If $G$ is a bipartite graph of density at most $\e^3$ then $G$ is $\e$-regular.
\end{fact}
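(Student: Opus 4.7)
The plan is to observe that, under the hypothesis $d(G) \le \e^3$, both $d(G)$ and $d(G[S_1,S_2])$ are automatically confined to the short interval $[0,\e]$ for every candidate pair $(S_1,S_2)$ in the definition of $\e$-regularity. Once this is established, the required inequality $|d(G[S_1,S_2]) - d(G)| \le \e$ follows immediately from the triangle inequality applied to two numbers in $[0,\e]$.

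The first containment is trivial from the hypothesis: $d(G) \le \e^3 \le \e$. For the second, I would use that the edges of $G[S_1,S_2]$ form a subset of the edges of $G$, so $|G[S_1,S_2]| \le |G| = d(G)|V_1||V_2|$. For subsets with $|S_1| \ge \e|V_1|$ and $|S_2| \ge \e|V_2|$, dividing by $|S_1||S_2| \ge \e^2 |V_1||V_2|$ yields
$$d(G[S_1,S_2]) \;\le\; \frac{d(G)|V_1||V_2|}{\e^2|V_1||V_2|} \;\le\; \frac{\e^3}{\e^2} \;=\; \e ,$$
so indeed $d(G[S_1,S_2]) \in [0,\e]$, which together with the first step completes the argument.

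There is essentially no obstacle here; the fact is a standard sanity-check observation, and the exponent $\e^3$ in the hypothesis is precisely calibrated to absorb the factor $\e^2$ lost when passing to a subgraph of size at least $\e|V_1| \times \e|V_2|$. The only (minor) care needed is to handle the edge case $|V_1|=0$ or $|V_2|=0$, where one takes $d(G)=0$ by convention so the statement holds vacuously.
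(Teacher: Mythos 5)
Your proof is correct and matches the paper's argument: the paper likewise bounds $d(G[S_1,S_2]) \le d(G)/\e^2 \le \e$ for the relevant subsets, with the rest left implicit. Your write-up just spells out the trivial remaining step that both quantities lie in $[0,\e]$.
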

To see why Fact~\ref{fact:trivial-reg} is true, note that all 
subsets $S$ and $T$, each of size at least an $\e$-fraction of its
vertex class, satisfy $d(S,T) \le d(G)/\e^2 \le \e$.

\subsection{Putting everything together}

We are now ready to prove Theorem~\ref{theo:main}.
Recall that a vertex partition $\P$ of a graph $G$ on $n$ vertices is said to be \emph{$\e$-regular} if $\sum_{X,X'} |X||X'| \le \e n^2$ where the sum is over all cluster pairs $X,X' \in \P$ such that $G[X,X']$ is not $\e$-regular.
We use the following standard terminology:
a (semi-) blowup is said to be \emph{balanced} if all vertex classes have the same size.
The \emph{common refinement} of partitions $\P$ and $\V$ is the partition $\{X \cap V \,\vert\, X \in \P, V \in \V, X \cap V \neq \emptyset\}$.

Our main result towards the proof of Theorem~\ref{theo:main} is the following, which in some sense lifts Claim~\ref{claim:reg-partite} to a statement about partitions. Note that in the construction here we need the blowup to be balanced, or at least not too ``unbalanced'', since a relatively small vertex class could be completely covered by small fragments of clusters from the regular partition, thereby making our slicing lemma in Claim~\ref{claim:slicing} unusable.
\begin{theo}\label{theo:main2}
	If $\P$ is an $\e$-$(H,F)$-regular partition of a balanced semi-blowup $H \sbu_e G_0$ then the common refinement of $\P$ and $\{V_1,V_2\}$ is an $\e^{1/4} k^{2k}$-regular partition of $G_0$.
\end{theo}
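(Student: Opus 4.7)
The plan is to descend the $(H,F)$-regularity of $\P$ on the balanced semi-blowup $G = H \sbu_e G_0$ down to Szemer{\'e}di regularity of the bipartite graph $G_0$, by combining Claims~\ref{claim:slicing} and~\ref{claim:reg-partite}. Let $m := |V_1| = \cdots = |V_k|$ and $\e' := \e^{1/4}k^{2k}$; I may assume $\e' < 1$ (else the conclusion is vacuous). The common refinement on $V_1 \cup V_2$ has clusters $X \cap V_1$ and $X \cap V_2$ for $X \in \P$, and same-side pairs carry no edges of $G_0$ and are trivially $\e'$-regular. So I need only bound $\sum |Y||Y'| \le 4\e' m^2 = \e'(2m)^2$ over cross pairs $(Y, Y')$ (with $Y \sub V_1$, $Y' \sub V_2$) where $G_0[Y, Y']$ fails to be $\e'$-regular.

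First, for each $i \in [k]$, I would apply Claim~\ref{lemma:approx-cover} with $U = V(G)$, $V = V_i$, and parameter $\a$ (calibrated below), producing $\P_i' \sub \P$ in which every cluster $X$ satisfies $|X \cap V_i| \ge \d|X|$ for $\d := \a/k$, and with $\sum_{X \in \P_i'}|X \cap V_i| \ge (1-\a)m$. Cross pairs in the refinement where either side comes from outside $\P_1'$ (resp.\ $\P_2'$) contribute at most $2\a m^2 \le \e' m^2$, assuming $\a \le \e'/2$.

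Next, fix a pair $(X, X') \in \P_1' \times \P_2'$ with $Y = X \cap V_1$, $Y' = X' \cap V_2$, and suppose $G_0[Y, Y']$ is \emph{not} $\e'$-regular; by Fact~\ref{fact:trivial-reg}, $d(G_0[Y, Y']) > (\e')^3$. For any completion $(X_3, \ldots, X_k) \in \P_3' \times \cdots \times \P_k'$, Observation~\ref{obs:sbu-induced} identifies the slice $G[Y, Y', X_3 \cap V_3, \ldots, X_k \cap V_k]$ with the semi-blowup $H \sbu_e G_0[Y, Y']$. Were the $k$-piece $G[X, X', X_3, \ldots, X_k]$ actually $\e$-$(H,F)$-regular, Claim~\ref{claim:slicing} would yield $\e''$-$(H,F)$-regularity of this slice with $\e'' = \e \cdot k!/\d^k$, and Claim~\ref{claim:reg-partite} would then force $G_0[Y, Y']$ to be $\sqrt{\e''}\,k^{2k}$-regular; if $\a$ is chosen so that $\d^k \ge \sqrt{\e}\cdot k!$---hence $\e'' \le \sqrt{\e}$ and $\sqrt{\e''}\,k^{2k} \le \e'$---this contradicts the hypothesis. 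Therefore \emph{every} such completion yields a non-$(H,F)$-regular $k$-piece, which by Observation~\ref{obs:sbu-copies} applied to the slice contains at least $(\e')^3|Y||Y'|\prod_{i \ge 3}|X_i \cap V_i|$ copies of $H$ (via the diagonal embedding $v_i \in X_i \cap V_i$); such copies are pairwise distinct across completions and across bad pairs.

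Summing this bound over completions (using $\prod_{i \ge 3}\sum_{X_i \in \P_i'}|X_i \cap V_i| \ge (1-\a)^{k-2}m^{k-2}$) and over bad pairs, and upper-bounding the total count of bad copies by $\e \cdot n_H(G) = \e d(G_0) m^k$ (the $(H,F)$-regular partition definition), a rearrangement yields $\sum_{\text{bad pairs}} |Y||Y'| \le \e m^2/[(\e')^3(1-\a)^{k-2}]$, and substituting $(\e')^4 = \e k^{8k}$ shows this is at most $\e' m^2$. Combined with the $\e' m^2$ from the first stage, the total is $\le 4\e' m^2$, as required. The main obstacle will be the parameter chase: the choice of $\a$ (equivalently $\d$) must simultaneously make the slicing blow-up $\e'' = \e k!/\d^k$ at most $\sqrt{\e}$ and keep $2\a m^2 \le \e' m^2$; the choice $\a \sim \e^{1/(2k)}k^2$ balances both throughout the valid range of $\e$.
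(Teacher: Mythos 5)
Your high-level plan matches the paper's proof closely: apply Claim~\ref{lemma:approx-cover} to discard small fragments, invoke Observation~\ref{obs:sbu-induced} plus Claims~\ref{claim:slicing} and~\ref{claim:reg-partite} to transfer regularity from $k$-pieces of $\P$ to bipartite pairs of the refinement, lower-bound the $H$-count in irregular pieces via Observation~\ref{obs:sbu-copies} and the contrapositive of Fact~\ref{fact:trivial-reg}, and compare against $\e\, n_H(G)$. The contrapositive/contradiction phrasing and the diagonal-embedding count are just restatements of the paper's ``$\R_\P^* \subseteq \R_\P$'' step.

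However, there is a genuine gap in the parameter chase, and it is exactly the spot you flagged as the ``main obstacle.'' You use a \emph{single} threshold $\d = \a/k$ for all $k$ coordinates, and need simultaneously (i) $\a \le \e'/2$ so the discarded fragments contribute at most $\e' m^2$, and (ii) $\d^k \ge k!\sqrt\e$ so that $\e'' = \e k!/\d^k \le \sqrt\e$ and hence $\sqrt{\e''}k^{2k} \le \e'$. But (i) forces $\d \lesssim \e'/k = \e^{1/4}k^{2k-1}$, hence $\d^k \lesssim \e^{k/4}\cdot k^{O(k^2)}$, which for any fixed $k \ge 3$ is $o(\sqrt\e)$ as $\e \to 0$; so (i) and (ii) are incompatible for all sufficiently small $\e$ once $k \ge 3$. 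Your proposed $\a \sim \e^{1/(2k)}k^2$ satisfies (ii) but then violates (i): $\e^{1/(2k)} \gg \e^{1/4}$ for $k \ge 3$ and $\e$ small. The fix, which is what the paper actually does, is to decouple the thresholds by coordinate: constraint (i) only involves coordinates $1$ and $2$ (these determine the leftover bipartite fragments), so the paper takes $\d_1 = \d_2 = \e'/2$ but $\d_3 = \cdots = \d_k = 1/(2k)$, a constant. Then $\D = \d_1\cdots\d_k \sim (\e')^2/k^{O(k)} = \sqrt{\e}\cdot k^{\Omega(k)}$, which comfortably exceeds $k!\sqrt\e$ with a margin depending only on $k$, not on $\e$, so both requirements hold for all $\e$ in the relevant range. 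Without this non-uniform choice the argument does not give the stated $\e^{1/4}k^{2k}$ bound (nor any bound of the form $\poly(\e k^k)$) when $k \ge 3$.
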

\begin{proof}
	Put $\e'=\e^{1/4} k^{2k}$.
	Let $G= H \odot_e G_0$ be a semi-blowup of $H$ with $n=|V_1|=\cdots=|V_k|$, 
	and let $\P$ be an $\e$-$(H,F)$-regular partition of $G$.
	For $i \in \{1,2\}$, denote the common refinement of $\P$ and $\{V_i\}$ by $\Q_i = \{ X \cap V_i \,\vert\, X \in \P,\, X \cap V_i \neq \emptyset\}$.
	We need to prove that
	\begin{equation}\label{eq:lb-goal}
	\sum_{\substack{(Y_1,Y_2) \in \Q_1\times\Q_2\\\text{not $\e'$-regular}}} |Y_1||Y_2| 
	\le \e' n^2 .
	\end{equation}
	Let $\P_i = \big\{ X \in \P \,\big\vert\, |X \cap V_i| \ge \d_i|X| \big\}$
	where we set, with hindsight,
	$$\d_i = 
	\begin{cases}
		\frac12\e'			&\text{if } i \in \{1,2\}\\
		\frac{1}{2k}	&\text{if } i \in \{3,\ldots,k\} \;.
	\end{cases}
	$$
	Moreover, let $\P_i^* = \big\{ X \cap V_i \,\big\vert\, X \in \P_i \big\}$.
	Using Claim~\ref{lemma:approx-cover} and the fact that $G$ is balanced,
	\begin{equation}\label{eq:approx-cover}
	\sum_{\substack{Y_3,\ldots,Y_k\colon\\\forall i \, Y_i \in \P_i^*}} \prod_{i=3}^k |Y_i|
	= \prod_{i=3}^k \sum_{Y_i \in \P_i^*} |Y_i|
	\ge \Big(\frac12n\Big)^{k-2} \;.
	\end{equation}
	%
	On the way to proving~(\ref{eq:lb-goal}), our first goal will be to prove an analogous bound for $\P_1^*\times\P_2^*$, rather than for $\Q_1\times\Q_2$, as follows;
	\begin{equation}\label{eq:lb-goal2}
	\sum_{\substack{(Y_1,Y_2) \in \P^*_1\times\P^*_2\\\text{not $\e'$-regular}}} |G_0[Y_1,Y_2]| 
	\le \e 2^{k} \cdot |G_0| \;.
	\end{equation}
	Put $\D=\d_1\cdots\d_k$ $(\ge {\e'}^2/(2k)^k)$.
	We claim that if the $k$-partite graph $G[X_1,\ldots,X_k]$ with $X_1\in \P_1,\ldots,X_k \in \P_k$ is $\e$-$(H,F)$-regular then the bipartite graph $G[X_1 \cap V_1, X_2 \cap V_2]$ is 
	$\e'$-regular.
	To see this, put $Y_i = X_i \cap V_i$ and note that $G[Y_1,\ldots,Y_k]$ is of the form $H \odot_e G[Y_1,Y_2]$, by Observation~\ref{obs:sbu-induced}, and since $|Y_i| \ge \d_i|X_i|$ for every $1 \le i \le k$, it is $\e k!\D^{-1}$-$(H,F)$-regular by Claim~\ref{claim:slicing}. 
	Next, Claim~\ref{claim:reg-partite} implies that $G[Y_1,Y_2]$ is 
	$\sqrt{\e k!\D^{-1}} k^{2k}$-regular. 
	Since $\sqrt{\e k!\D^{-1}} k^{2k} \le (\sqrt{\e}/\e') k^{3k} \le \e'$, our claim follows.
	Now, to simplify the discussion, let us introduce two pieces of notation:  
	$$\R_\P = \big\{(X_1,\ldots,X_k) \in \P_1\times\cdots\times\P_k \,\big\vert\, G[X_1,\ldots,X_k] 
	\text{ is not $\e$-$(H,F)$-regular} \big\}\,,$$ 
	$$\R_\P^* = \big\{(X_1,\ldots,X_k) \in \P_1\times\cdots\times\P_k \,\big\vert\, G[X_1 \cap V_1, X_2 \cap V_2] 
	\text{ is not $\e'$-regular} \big\} .$$
	Then the claim above, in contrapositive, says $\R_\P^* \sub \R_\P$. 
	We deduce that
	\begin{align*}
	\sum_{(X_1,\ldots,X_k) \in \R_\P} n_H(G[X_1,\ldots,X_k]) 
	&\ge \sum_{(X_1,\ldots,X_k) \in \R_\P^*} n_H(G[X_1,\ldots,X_k]) \\
	&\ge \sum_{(X_1,\ldots,X_k) \in \R_\P^*} n_H(G[X_1 \cap V_1,\ldots,X_k \cap V_k]) \\
	&= \sum_{\substack{Y_3,\ldots,Y_k\colon\\\forall i \, Y_i \in \P_i^*}} \sum_{\substack{(Y_1,Y_2) \in \P^*_1\times\P^*_2\\\text{not $\e'$-regular}}} n_H(G[Y_1,\ldots,Y_k])\\
	&= \sum_{\substack{Y_3,\ldots,Y_k\colon\\\forall i \, Y_i \in \P_i^*}} \sum_{\substack{(Y_1,Y_2) \in \P^*_1\times\P^*_2\\\text{not $\e'$-regular}}} |G[Y_1,Y_2]|\cdot|Y_3|\cdots|Y_k|\\
	&\ge \Big(\frac12 n\Big)^{k-2} \sum_{\substack{(Y_1,Y_2) \in \P^*_1\times\P^*_2\\\text{not $\e'$-regular}}} |G[Y_1,Y_2]| \;,
	\end{align*}
	where the last equality uses Observation~\ref{obs:sbu-induced} and Observation~\ref{obs:sbu-copies}, and the last inequality uses~(\ref{eq:approx-cover}).
	On the other hand, as $\P$ is an $\e$-$(H,F)$-regular partition of $G$, 
	$$\sum_{(X_1,\ldots,X_k) \in \R_\P} n_H(G[X_1,\ldots,X_k]) 
	\le \e \cdot n_H(G) = \e \cdot |G_0|n^{k-2} ,$$
	where the last equality uses Observation~\ref{obs:sbu-copies}.
	Combining the above two inequalities implies~(\ref{eq:lb-goal2}).
	
	To deduce from~(\ref{eq:lb-goal2}) that $\Q_1\cup\Q_2$ is indeed an $\e'$-regular partition of $G_0$, we will need two more observations.
	First, using Fact~\ref{fact:trivial-reg}, 
	$$\sum_{\substack{(Y_1,Y_2) \in \P^*_1\times\P^*_2\\\text{not $\e'$-regular}}} |Y_1||Y_2| \le \sum_{\substack{(Y_1,Y_2) \in \P^*_1\times\P^*_2\\\text{not $\e'$-regular}}} {\e'}^{-3}|G_0[Y_1,Y_2]| \le {\e'}^{-3}\e 2^k|G_0| \le \frac12\e'|G_0| \le \frac12\e' n^2 ,$$
	where the second inequality follows from~(\ref{eq:lb-goal2}), and the penultimate inequality uses the choice of $\e'$.
	Second, we claim that every cluster pair $(Y_1,Y_2) \in (\Q_1\times\Q_2) \setminus (\P_1^*\times\P_2^*)$ satisfies $|Y_1||Y_2| \le \d_1|X_1||X_2|$, where $X_i \in \P$ is uniquely defined by $Y_i = X_i \cap V_i$.
	Indeed, without loss of generality $Y_1 \notin \P_1^*$, meaning $|Y_1| \le \d_1|X_1|$, so  $|Y_1||Y_2| \le \d_1|X_1||X_2|$.
	It follows that
	$$\sum_{\substack{(Y_1,Y_2) \in \Q_1\times\Q_2 \colon\\(Y_1,Y_2) \notin \P_1^*\times\P_2^*}} |Y_1||Y_2| 
	\le \sum_{X,X' \in \P} \d_1|X \cap V_1||X' \cap V_2|
	= \d_1 \Big(\sum_{X \in \P} |X \cap V_1|\Big)\Big(\sum_{X \in \P} |X \cap V_2|\Big)
	= \d_1 n^2 .$$
	Combining the last two inequalities, and using our choice of $\d_1=\frac12\e'$,
	we obtain~(\ref{eq:lb-goal}) and thus complete the proof.
\end{proof}

To deduce Theorem~\ref{theo:main} from Theorem~\ref{theo:main2}, we use the celebrated tower-type lower bound for 
Szemer{\'e}di's regularity lemma, originally proved 
by Gowers~\cite{Gowers97}. (See also~\cite{MoshkovitzSh16} 
for a simpler proof.)
For our application we need 
the bound to hold without assuming the partition is equitable.\footnote{In an equitable partition, all parts have the same size give or take $1$.}
As was shown by Fox, Lov{\'a}sz and Zhao (see Theorem~2.2 in~\cite{FoxLoZh17}, or Theorem~1.2 in~\cite{FoxLo17}), this can be assumed without significant loss.

\begin{theo}[\cite{FoxLoZh17,Gowers97}]\label{theo:RL-lb}
	There is a bipartite graph whose every $\e$-regular partition is of order at least $\twr(1/\poly(\e))$.
\end{theo}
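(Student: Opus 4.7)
The proof is a direct combination of two results already in the literature. The plan is to obtain the tower-type bound for equitable $\e$-regular partitions from Gowers' construction, and then lift the bound to arbitrary (not necessarily equitable) $\e$-regular partitions via the reduction of Fox, Lovász and Zhao.

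First I would invoke Gowers' theorem \cite{Gowers97} (or, for a cleaner presentation, the simpler proof in \cite{MoshkovitzSh16}) to obtain, for each small $\e>0$, a bipartite graph $G=G_\e$ such that every \emph{equitable} $\e$-regular partition of $G$ has order at least $\twr(1/\e^{c})$ for some absolute constant $c>0$. The graph $G$ is built by an iterative construction: one maintains a self-similar bipartite structure over $\twr(\Omega(1/\e))$ many levels, at each level halving (or otherwise subdividing) the current vertex classes while recording an adversarial density pattern that prevents any regular partition from coarsely aggregating cells from different levels. The tower height emerges because, to certify $\e$-regularity at the bottom of the hierarchy, any regular partition must essentially refine every level of the construction, and the refinement factor per level is bounded away from $1$.

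Second, I would apply the equitability result of Fox--Lovász--Zhao (Theorem~2.2 in \cite{FoxLoZh17}): given any $\e$-regular partition $\P$ of a graph on $n$ vertices, one can produce an equitable $\e'$-regular partition $\P'$ whose order is at most $|\P|\cdot \poly(1/\e)$, for some $\e' = \poly(\e)$. The procedure selects a unit size $m\approx \e n/|\P|$, slices each cluster of $\P$ into equal-size blocks of size $m$, and redistributes the negligible leftover vertices. Since $\P'$ is essentially a refinement of $\P$, and each cluster of $\P'$ covers at least a $\poly(\e)$ fraction of its parent cluster in $\P$, one can check (via a slicing argument analogous to the one for regular bipartite graphs) that $\P'$ remains regular with the claimed polynomial loss.

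Combining the two steps gives the theorem: take the Gowers graph $G_{\e_0}$ for a suitable $\e_0 = \poly(\e)$, and suppose it admits an $\e$-regular partition $\P$ of order $N$. Converting $\P$ to an equitable $\e_0$-regular partition $\P'$ via Fox--Lovász--Zhao yields a partition of order at most $N \cdot \poly(1/\e)$, which by Gowers' lower bound must be at least $\twr(1/\e_0^c) = \twr(1/\poly(\e))$. Rearranging, $N \ge \twr(1/\poly(\e))$, completing the proof. The main obstacle is Step~1, Gowers' construction and its tower-height analysis; Step~2 is a more routine refinement argument, but one must verify carefully that the parameter loss in the equitization is polynomial rather than exponential in $1/\e$, since otherwise the tower bound would collapse.
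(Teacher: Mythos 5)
Your top-level plan (take Gowers' construction as a black box for equitable partitions, then remove the equitability assumption via Fox--Lov\'asz--Zhao) is the same route the paper takes: the paper does not prove this theorem at all, it cites \cite{Gowers97} for the tower bound and \cite{FoxLoZh17,FoxLo17} precisely for the fact that equitability can be dropped. The problem is with the mechanism you propose for the second step. Slicing each cluster of an arbitrary $\e$-regular partition $\P$ into blocks of a fixed size $m \approx \e n/|\P|$ and appealing to ``a slicing argument analogous to the one for regular bipartite graphs'' does not work: the slicing lemma only preserves regularity for subsets that occupy at least a $\delta$-fraction of their cluster (with loss $\e/\delta$), whereas here a block can be a vanishingly small fraction of a large cluster. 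Worse, the statement you would need is simply false for a fixed fine equipartition: one can build an $\e$-regular pair $(X,X')$ together with an equipartition of $X$ and $X'$ into blocks of size $m$ such that \emph{every} block pair fails to be $\tfrac14$-regular. (For each block pair independently, split both blocks randomly in half and insert the ``two complete halves, empty cross'' pattern; each block pair has density $\tfrac12$ but contains a half-sized subpair of density $1$, while a Chernoff-plus-union-bound argument shows the global pair is $\e$-regular once $m$ is small compared to $\e^{3}\sqrt{n}$.) So the claim that $\P'$ inherits regularity from $\P$ with only polynomial loss cannot be ``checked'' the way you describe; your leftover-redistribution step has the same issue.

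This is not a cosmetic point: handling non-equitable partitions with only polynomial loss is exactly the nontrivial content of the cited results. The statement invoked in the paper (Theorem~2.2 of \cite{FoxLoZh17}, or Theorem~1.2 of \cite{FoxLo17}) is the tower lower bound for \emph{arbitrary} $\e$-regular partitions itself, proved by working with the lower-bound construction directly (mean-square-density/index arguments over arbitrary partitions), not by converting an arbitrary regular partition into an equitable one. If you insist on a genuine reduction of the form ``arbitrary $\Rightarrow$ equitable with $\poly(1/\e)$ blow-up,'' you would need a probabilistic inheritance lemma (a \emph{random} equitable refinement makes most sub-pairs of a regular pair regular), which is a real lemma requiring proof and is different from the deterministic slicing you invoke. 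Otherwise, the correct and intended move is the one the paper makes: cite \cite{Gowers97} together with \cite{FoxLoZh17,FoxLo17} as a black box.
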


\begin{proof}[Proof of Theorem~\ref{theo:main}]
	Let $G_0$ be the bipartite graph given by Theorem~\ref{theo:RL-lb}.
	Consider a balanced semi-blowup $G=H \odot_e G_0$ (where, implicitly, the blowup has sufficiently many vertices for the semi-blowup $G$ to be well defined). 
	Let $\P$ be an $\e$-$(H,F)$-regular partition of $G$,
	and let $\Q$ be the common refinement of $\P$ and $\{V_1,V_2\}$, where $V_1,V_2$ are the vertex classes of $G_0$.
	Apply Theorem~\ref{theo:main2} to deduce that $\Q$ is a $\poly(\e k^k)$-regular partition of $G_0$.
	By Theorem~\ref{theo:RL-lb}, the order of $\Q$ is at least $\twr(1/\poly(\e k^k))$, 
	and we are done.
\end{proof}

\section{Concluding Remarks}

We have shown that tower-type lower bounds hold for every generalized regularity lemma.
One may raise the objection that, since a generalized regularity lemma can be meaningful also for sparse graphs (that is, of density tending to $0$ with the number of vertices), it is not as satisfactory if our lower bound construction is dense.
Indeed, the construction in Theorem~\ref{theo:main}, 
being a semi-blowup of $H$, is---for any $H$ with at least two edges---of 
density at least $1/k$.
However, it turns out that one can easily get arbitrarily sparse graph with the same lower bound, provided $F$ is connected.  
For example, one may add any desired number of isolated vertices to the graph from Theorem~\ref{theo:main} so as to reduce the density as needed, yet still maintain the property that any $(H,F)$-regular partition is large. Indeed, this is a consequence of the following observation.

\begin{observation}
	Suppose $F$ is connected.
	Let a $k$-partite graph $G$ be obtained from a $k$-partite graph $G_0$ by adding isolated vertices.
	If $G$ is $\e$-$(H,F)$-regular then $G_0$ is also $\e$-$(H,F)$-regular.
\end{observation}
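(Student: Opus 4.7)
The plan is to show that adding isolated vertices changes neither $n_H$ nor $n_F$, and that induced $k$-partite subgraphs of $G_0$ correspond to induced $k$-partite subgraphs of $G$ with identical copy counts; then the regularity of $G$ will transfer directly.

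First I would observe that because $F$ is connected and has $k \ge 2$ vertices, every vertex of $F$ has an $F$-neighbor, and hence (since $F \sub H$) every vertex of $H$ has an $H$-neighbor as well. Consequently, no copy of $F$ or $H$ can include an isolated vertex of the ambient graph. Applied to $G$, this gives $n_F(G)=n_F(G_0)$ and $n_H(G)=n_H(G_0)$, and in particular $\coeff{H}{F}(G)=\coeff{H}{F}(G_0)$.

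Next, let $G_0'=G_0[X_1,\ldots,X_k]$ be an induced $k$-partite subgraph of $G_0$ witnessing a possible failure of regularity, i.e.\ with $n_F(G_0') \ge \e\, n_F(G_0)$. Viewing each $X_i$ as a subset of the corresponding vertex class of $G$, define $G':=G[X_1,\ldots,X_k]$. Since $G$ and $G_0$ agree on edges and the $X_i$ contain the same vertices in both graphs, $G'$ equals $G_0'$ as a labeled graph; in particular $n_F(G')=n_F(G_0')$ and $n_H(G')=n_H(G_0')$. Combined with the equality $n_F(G)=n_F(G_0)$ from the previous step, the hypothesis on $G_0'$ gives $n_F(G') \ge \e\, n_F(G)$, so $G'$ is a valid witness to test the $\e$-$(H,F)$-regularity of $G$.

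Invoking the $\e$-$(H,F)$-regularity of $G$ then yields $\coeff{H}{F}(G')=\coeff{H}{F}(G)\pm\e$, and translating both sides back via the equalities $\coeff{H}{F}(G')=\coeff{H}{F}(G_0')$ and $\coeff{H}{F}(G)=\coeff{H}{F}(G_0)$ gives exactly the desired $\coeff{H}{F}(G_0')=\coeff{H}{F}(G_0)\pm\e$. The whole argument is essentially bookkeeping; the only substantive point—and the only place the connectedness of $F$ is used—is the opening observation that isolated vertices cannot participate in copies of $F$ or $H$, so I expect no real obstacle beyond stating the correspondence between induced subgraphs carefully.
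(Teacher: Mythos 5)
Your proof is correct and takes essentially the same route as the paper: use connectedness of $F$ (hence of $H$, since $E(F)\sub E(H)$ on the same vertex set) to show that isolated vertices contribute no copies, so $n_F$, $n_H$, and the $(H,F)$-coefficient are unchanged, and then transfer the regularity condition verbatim. The only difference is that you spell out the correspondence between induced subgraphs of $G_0$ and $G$, which the paper leaves implicit.
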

\begin{proof}
	As $F$ and $H$ are connected, $n_F(G)=n_F(G_0)$ and $n_H(G)=n_H(G_0)$.
	Let $G'$ be a $k$-partite induced subgraph of $G_0$ with $n_F(G') \ge \e n_F(G_0)$.
	Then $n_F(G') \ge \e n_F(G)$, and as $G$ is $\e$-$(H,F)$-regular,
	$\coeff{H}{F}(G')=\coeff{H}{F}(G) \pm \e$,
	that is, 
	$\coeff{H}{F}(G')=\coeff{H}{F}(G_0) \pm \e$, as needed.
\end{proof}


\begin{thebibliography}{99}
	
	\bibitem{Chung19}
	F.~Chung, 
	{Regularity lemmas for clustering graphs}, 
	\emph{Adv. in Appl. Math.}, to appear (\url{http://www.math.ucsd.edu/~fan/wp/reg_cluster.pdf}).
	
	\bibitem{FoxLo17}
	J.~Fox and L.M.~Lov{\'a}sz,
	{A tight lower bound for Szemer{\'e}di's regularity lemma},
	\emph{Combinatorica} \textbf{37} (2017), 911--951.
	
	\bibitem{FoxLoZh17}
	J.~Fox, L.M.~Lov{\'a}sz and Y.~Zhao,
	{On regularity lemmas and their algorithmic applications},
	\emph{Combin. Probab. Comput.} \textbf{26} (2017), 481-–505.
	
	\bibitem{Gowers97}
	T.~Gowers,
	{Lower bounds of tower type for Szemer\'edi's uniformity lemma},
	\emph{Geom. Funct. Anal.} \textbf{7} (1997), 322--337.
	
	\bibitem{LucePe49}
	R.~D.~Luce and A.~D.~Perry, 
	{A method of matrix analysis of group} structure, 
	\emph{Psychometrika} \textbf{14} (1949), 95--116.
	
	\bibitem{MoshkovitzSh16}
	G.~Moshkovitz and A.~Shapira,
	{A short proof of Gowers' lower bound for the regularity lemma},
	\emph{Combinatorica} \textbf{36} (2016), 187--194.
	
	\bibitem{Szemeredi78}
	E.~Szemer\'edi,
	{Regular partitions of graphs},
	In: {\em Proc.\ Colloque Inter.\ CNRS} 
	(1978), 399--401.
	
	\bibitem{WattsSt98} 
	D.~Watts and S.~Strogatz, 
	{Collective dynamics of ‘small-world’ networks}, 
	\emph{Nature} \textbf{393} (1998), 440--442.
	
\end{thebibliography}
\end{document}